\newcommand{\g}{\mathfrak{g}}
\newcommand{\ra}{\rightarrow}
\newcommand{\mh}{\mathfrak{h}}
\newcommand{\ve}{\varepsilon}
\newcommand{\vp}{\varphi}
\newcommand{\bino}[2]{\left[\genfrac{}{}{0pt}{}{#1}{#2}\right]}
\newcommand{\ts}{\otimes}
\newcommand{\s}{\sigma}
\newcommand{\pp}{\mathcal{P}}
\newcommand{\mz}{\mathbb{Z}}
\newcommand{\mc}{\mathbb{C}}
\newcommand{\fb}{\mathfrak{B}}
\newcommand{\wt}{\widetilde}
\newcommand{\id}{\operatorname*{id}}
\newcommand{\Tr}{\operatorname*{Tr}}
\newcommand{\End}{\operatorname*{End}}
\newcommand{\Aut}{\operatorname*{Aut}}
\numberwithin{equation}{section}
\newtheorem{definition}{Definition}
\newtheorem{proposition}{Proposition}
\newtheorem{theorem}{Theorem}
\newtheorem{lemma}{Lemma}
\newtheorem{remark}{Remark}
\title{Dedekind $\eta$-function and quantum groups}
\author{Xin Fang}
\address{Mathematisches Institut, Universit\"{a}t zu K\"{o}ln, Weyertal 86-90, D-50931 K\"{o}ln.}
\email{xinfang.math@gmail.com}
\keywords{quantum groups, Dedekind $\eta$-function, braid goups}
\subjclass[2010]{17B37, 20G42, 81R50}
\begin{document}
\maketitle

\begin{abstract}
We realize some powers of Dedekind $\eta$-function as traces on quantum coordinate algebras.
\end{abstract}

\section*{Introduction}
\subsection*{History}
The partition function $p(n)$ of a positive integer $n$ and its numerous variants have a long history in combinatorics and number theory. A basic method to study these functions defined on the set of integers is considering their generating functions (for example: $\psi(x)=\sum_{n\geq 0}p(n)x^n$) to study their analytical properties, the algebraic equations they satisfy or the (quasi-)symmetries under group actions and so on.
\par
In the case of partition function, the inverse $\psi(x)^{-1}$ of its generating function admits the following description
$$\vp(x):=\psi(x)^{-1}=\prod_{n\geq 1}(1-x^n).$$
The function $\psi(x)$ is closed related to modular forms: for instance, $\eta(x)=x^{\frac{1}{24}}\vp(x)$ is the Dedekind $\eta$-function and up to a scalar, $\Delta(x)=\eta(x)^{24}$ is a modular form of weight $12$ whose Fourier coefficients
give the famous Ramanujan's $\tau$-function.
\par
Powers of $\vp(x)$ are initially studied by Euler and Jacobi: Euler discovered a relation between $\vp(x)$ and the pentagon numbers
$$\vp(x)=\sum_{n\in\mathbb{Z}}(-1)^n x^{\frac{3n^2-n}{2}}$$
and Jacobi deduced the expression of $\vp(x)^3$ in terms of triangle numbers in the study of elliptic functions:
$$\vp(x)^3=\sum_{n=0}^\infty (-1)^n (2n+1) x^{\frac{n(n+1)}{2}}.$$

\subsection*{Work of MacDonald and Kostant}
These formulae involving powers of the Dedekind $\eta$-function are generalized in the seminal work of I. MacDonald \cite{Mac} by interpreting them as special cases of the Weyl denominator formula of some affine root systems: the Jacobi identity above can be obtained using combinatorial data from the affine root system of type $A_1$.
\par
To be more precise, for any reduced root system on a finite dimensional real vector space $V$ with the standard bilinear form $(\cdot,\cdot)$, we can associate to it a complex Lie algebra $\g$. The following identity (formula (0.5) in \cite{Mac}) is a specialization of the Weyl denominator formula:
\begin{equation}\label{0.1}
\eta(x)^d=\sum_{\mu\in M}d(\mu) x^{(\mu+\rho,\mu+\rho)/2g},
\end{equation}
where $M$ is a certain explicitly described subset of the set of dominant integral weights, $d=\textrm{dim}\g$, $d(\mu)$ is the dimension of the irreducible representation of $\g$ of highest weight $\mu$, $g=\frac{1}{2}((\phi+\rho,\phi+\rho)-(\rho,\rho))$, $\phi$ is the highest root of $\g$ and $\rho$ is half sum of positive roots.
\par
B. Kostant \cite{Kostant} gave another description of $M$ by connecting it with the trace of a Coxeter element $\bold{c}$ in the Weyl group $W$ acting on the subspace of weight zero $V_1(\lambda)_0$ in the irreducible representation $V_1(\lambda)$ associated to a dominant integral weight $\lambda\in\mathcal{P}_+$. If the Lie algebra is simply-laced (i.e., of type A,D,E), Kostant's formula reads:
\begin{equation}\label{0.2}
\eta(x)^{d}=\sum_{\lambda\in\mathcal{P}_+}\Tr (\bold{c},V_1(\lambda)_0) \textrm{dim}V_1(\lambda) x^{(\lambda+\rho,\lambda+\rho)}.
\end{equation}
A similar result valid for a general $\g$ can be found in \cite{Kostant}, Theorem 1.
\par
A good summary of these works can be found in a Bourbaki seminar talk \cite{Demazure} by M. Demazure.

\subsection*{Main results}
The main goal of this paper is to prove identities in the spirit of formulae (\ref{0.1}) and (\ref{0.2}) in the framework of quantum groups, giving compact forms of identities cited above.
\par
Let $\g$ be a finite dimensional simple Lie algebra and $U_q(\g)$ be the associated quantum group over $\mc(q)$. The Artin braid group $\mathfrak{B}_\g$ associated to the Weyl group $W$ of $\g$ acts on the irreducible representation $V(\lambda)$ of $U_q(\g)$ with highest weight $\lambda\in\mathcal{P}_+$. Let $\{\s_1,\cdots,\s_l\}$ be the set of generators of $\mathfrak{B}_\g$, $\Pi=\s_1\cdots\s_l$ be a Coxeter element and $h$ be the Coxeter number of the Weyl group. Then $\Pi\ts \id$ acts on the quantum coordinate algebra $\mc_q[G]=\bigoplus_{\lambda\in\mathcal{P}_+}V(\lambda)\ts V(\lambda)^*$ componentwise (see the argument before Theorem \ref{Thm:main} for details) and we obtain finally
\\
\\
\noindent
\textbf{Theorem.} The following identity holds:
$$\Tr (\Pi\ts\id,\mc_q[G])=\left(\prod_{i=1}^l\vp(q^{(\alpha_i,\alpha_i)})\right)^{h+1}.$$

\subsection*{Organization of this paper}
In Section \ref{s2} and \ref{s3} the basic definitions and properties of Lie algebras and quantum groups are recalled.
Section \ref{s5} is devoted to computing the action of a Coxeter element in the Artin braid group on the irreducible representations of the quantum group $U_q(\g)$, which leads to the main theorem in Section \ref{s6}.

\subsection*{Acknowledgements}
This work was part of the author's Ph.D thesis, supervised by Professor Marc Rosso. I am grateful to his guidance and treasurable discussions on this problem. I would like to thank Jiuzu Hong for pointing out an error in earlier computations. I am grateful to the anonymous referee, whose remarks simplify the paper substantially. The revision of this paper is carried on when the author is supported by the Alexander von Humboldt Foundation.

\section{Quantum groups}\label{s2}

We recall the necessary notations and definitions for Lie algebras and corresponding quantum groups.

\begin{enumerate}
\item $\g$ is a finite dimensional simple Lie algebra over $\mathbb{C}$ with a fixed Cartan subalgebra $\mathfrak{h}$. We let $l=\textrm{dim}\mathfrak{h}$ denote the rank of $\g$ and $I$ be the index set $\{1,\cdots,l\}$.
\item $\Phi:\g\times\g\ra\mc$ is the Killing form. Its restriction to $\mathfrak{h}$ is non-degenerate and then induces a bilinear form on $\mathfrak{h}^*$ which is also denoted by $\Phi$.
\item $\Delta_+\subset\mathfrak{h}^*$ is the set of positive roots of $\g$ and $\{\alpha_1,\cdots,\alpha_l\}$ is the set of simple roots of $\g$.
\item $C=(c_{ij})_{l\times l}$, where $c_{ij}=2\Phi(\alpha_i,\alpha_j)/\Phi(\alpha_i,\alpha_i)$, is the Cartan matrix of $\g$.
\item $W$ is the Weyl group of $\g$ generated by simple reflections $s_i:\mh^*\ra\mh^*$ for $i\in I$ where $s_i(\alpha_j)=\alpha_j-c_{ij}\alpha_i$.
\item A Coxeter element is a product of all simple reflections, any two Coxeter elements are conjugate in $W$.  We let $h$ be the Coxeter number of the Weyl group $W$: it is by definition the order of a Coxeter element.
\item $(\cdot,\cdot):\mh^*\times\mh^*\ra\mathbb{Q}$ is a unique $W$-invariant scalar product such that $(\alpha,\alpha)=2$ for all short roots $\alpha$. We denote $A=(a_{ij})_{l\times l}$ be the matrix with $(\alpha_i,\alpha_j)=a_{ij}$: it is proportional to the Killing form. There exists a constant $k\in\mc^*$ such that for any $x,y\in\mh^*$, $k\Phi(x,y)=(x,y).$ We define $r_\g=k/h\in\mathbb{Q}$. 
\item $D=\textrm{diag}(d_1,\cdots,d_l)$ is the diagonal matrix with $d_i=(\alpha_i,\alpha_i)/2$. Then the matrix $A=DC$. 
\item $\mathcal{Q}=\mz\alpha_1+\cdots+\mz\alpha_l$ is the root lattice and $\mathcal{Q}_+=\mathbb{N}\alpha_1+\cdots+\mathbb{N}\alpha_l$.
\item $\pp$ is the weight lattice and $\pp_+$ is the set of dominant integral weights.
\item For $\lambda\in\pp_+$, $V_1(\lambda)$ is the finite dimensional irreducible representation of $\g$ of highest weight $\lambda$.
\item The coroot $\alpha^\vee$ of a root $\alpha$ is denoted by $\alpha^\vee=2/(\alpha,\alpha)$.
\item $\rho$ is the half sum of all positive roots.. For $\lambda\in\mathcal{P}_+$, we denote $c(\lambda)=\Phi(\lambda+\rho,\lambda+\rho)-\Phi(\rho,\rho)$.
\end{enumerate}

We assume that $q$ is a variable and $q_i=q^{d_i}$. The $q$-numbers and $q$-exponentials are defined by:
$$[n]_q=\frac{q^n-q^{-n}}{q-q^{-1}},\ \ [n]_q!=\prod_{i=1}^n [i]_q,\ \ \textrm{exp}_q(x)=\sum_{k=0}^\infty \frac{1}{[k]_q!}q^{\frac{k(k-1)}{2}}x^k.
$$
\begin{definition}
The quantized enveloping algebra (quantum group) $U_q(\g)$ is the associative unital $\mc(q)$-algebra with generators $E_i, F_i, K_i, K_i^{-1}$ for $i\in I$ and relations: for $i,j\in I$,
$$K_iK_j=K_jK_i,\ \ K_iK_i^{-1}=K_i^{-1}K_i=1,$$
$$K_iE_jK_i^{-1}=q_i^{c_{ij}}E_j,\ \ K_iF_jK_i^{-1}=q_i^{c_{ij}}F_j,\ \ [E_i,F_j]=\delta_{ij}\frac{K_i-K_i^{-1}}{q_i-q_i^{-1}};$$
and for $i\neq j\in I$,
$$\sum_{r=0}^{1-c_{ij}}\bino{1-c_{ij}}{r}_{q_i}E_i^{1-c_{ij}-r}E_jE_i^r=0,\ \ \sum_{r=0}^{1-c_{ij}}\bino{1-c_{ij}}{r}_{q_i}F_i^{1-c_{ij}-r}F_jF_i^r=0.$$
\end{definition}

For $\lambda\in\pp_+$, we let $V(\lambda)$ denote the finite dimensional irreducible representation of $U_q(\g)$ of highest weight $\lambda$ and type $1$.
\par
Let $U(\g)$ be the enveloping algebra associated to $\g$ with generators $e_i, f_i, h_i$ for $i\in I$.
\par
It should be remarked that $U_q(\g)$ has a $\mathbb{Z}[q,q^{-1}]$-form which is called an integral form (for example, see Chapter 9 in \cite{CP} for details). This integral form allows us to specialize $U_q(\g)$ to any non-zero complex number $z$. We let $\lim_{q\ra z}U_q(\g)$ denote the specialized Hopf algebra. It is well known that $\lim_{q\ra 1}U_q(\g)$ is isomorphic to $U(\g)$.
\par
Moreover, finite dimensional representations of $U_q(\g)$ can be specialized: when $q$ tends to $1$, the representation $V(\lambda)$ is specialized to $V_1(\lambda)$.

\section{Braid group actions}\label{s3}

We briefly recall the braid group actions on the finite dimensional $U_q(\g)$-modules. For $i,j\in I$, when the product $c_{ij}c_{ji}$ equals to $0,1,2,3,4$, repectively, we denote $m_{ij}=2,3,4,6,\infty$.

\begin{definition}
The Artin braid group $\fb_\g$ associated to the Weyl group $W$ of $\g$ is a group generated by $\s_1,\cdots,\s_l$ and relations
$$\s_i\s_j\cdots\s_i\s_j=\s_j\s_i\cdots\s_j\s_i,$$
where lengths of words in both sides are $m_{ij}$.
\end{definition}

For example, if the Lie algebra $\g$ is of type $A_l$, then $m_{ij}=3$ if $|i-j|=1$, otherwise $m_{ij}=2$. In this case the Artin group $\fb_\g$ is the usual braid group $\fb_{l+1}$.
\par
We let $\Pi=\s_1\cdots\s_l$ be a product of generators in $\fb_\g$ and call it a Coxeter element. For an element $w$ in the Weyl group $W$ with reduced expression $w=s_{i_1}\cdots s_{i_t}$, we let $T(w)=\s_{i_1}\cdots \s_{i_t}$ be the element in $\mathfrak{B}_\g$. It is well-known that $T(w)$ is independent of the reduced expression. Let $w_0$ be the longest element in $W$. We call $\Delta=T(w_0)$ the Garside element in $\mathfrak{B}_\g$. The following proposition explains some properties concerning the Coxeter element $\Pi$.

\begin{proposition}[\cite{BS}, Lemma 5.8 and Satz 7.1]\label{Prop:Artin}
$ $\par
\begin{enumerate}
\item Let $\Delta$ be the Garside element in $\fb_\g$. Then $\Pi^h=\Delta^2$.
\item If $\g$ is not isomorphic to $\mathfrak{sl}_2$, the centre $Z(\fb_\g)$ of $\fb_\g$ is generated by $\Delta^2$. If $\g\cong \mathfrak{sl}_2$, the centre $Z(\fb_\g)$ is generated by $\s_1=\Delta$.
\end{enumerate}
\end{proposition}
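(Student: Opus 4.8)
The natural setting is the positive braid monoid $\fb_\g^+$, defined by the same presentation as $\fb_\g$ but using only positive words; it embeds into $\fb_\g$ and carries a Garside structure whose fundamental element is precisely $\Delta=T(w_0)$. I would first record the three structural facts I rely on: $\Delta$ is the least common multiple of $\s_1,\dots,\s_l$ for left- (equivalently right-) divisibility, so a positive element is left-divisible by $\Delta$ if and only if it is left-divisible by every $\s_i$; the left divisors of $\Delta$ are exactly the positive lifts $T(w)$, $w\in W$; and conjugation by $\Delta$ induces the diagram automorphism $\tau\colon\s_i\mapsto\s_{i^*}$, where $w_0(\alpha_i)=-\alpha_{i^*}$, with $\tau^2=\id$.

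For part (1) the two sides are positive words of the same length: writing $N=|\Delta_+|=\ell(w_0)$ and using $N=lh/2$, both $\Pi^h=(\s_1\cdots\s_l)^h$ and $\Delta^2$ have length $2N$, so it suffices to prove they coincide in $\fb_\g^+$. My plan is a divisibility argument: show that $\Delta$ left-divides $\Pi^h$, which by the lcm property reduces to checking that each generator $\s_i$ left-divides $\Pi^h$; then show that the positive complement $\beta:=\Delta^{-1}\Pi^h$, a word of length $N$, is again left-divisible by every $\s_i$, whence $\Delta^2\mid\Pi^h$ and equality follows from the coincidence of lengths. The technical heart is the verification that every $\s_i$ divides $\Pi^h$; this is where the order $h$ of the Coxeter element and the combinatorics of reduced words (the $c$-sortability of $w_0$) enter, and it is the step I expect to be the main obstacle in an elementary treatment. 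In type $A$ this identity is the classical statement $(\s_1\cdots\s_{l})^{l+1}=\Delta^2$ expressing the full twist, which is transparent geometrically: $\Pi$ is the rigid rotation of the $(l+1)$-punctured disc by $2\pi/(l+1)$ and iterating it $h=l+1$ times produces the Dehn twist about the boundary.

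For parts (2) and (3) the containment $\langle\Delta^2\rangle\subseteq Z(\fb_\g)$ is immediate: conjugation by $\Delta^2$ is $\tau^2=\id$, so $\Delta^2$ is central, and by part (1) so is $\Pi^h$; moreover when $\g\cong sl_2$ the group $\fb_\g=\langle\s_1\rangle$ is infinite cyclic, hence equal to its centre, with generator $\s_1=\Delta$. The real content is the reverse containment $Z(\fb_\g)\subseteq\langle\Delta^2\rangle$. Since $\Delta^2$ is central and positive, for any $z\in Z(\fb_\g)$ a sufficiently high power $z\,\Delta^{2m}$ lies in $\fb_\g^+$ and is still central, so it is enough to determine the central elements of the monoid. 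Here I would put such an element in left normal form $\Delta^k z_1\cdots z_r$ and use invariance under conjugation by $\Delta$ and by the generators: the $\tau$-action permutes the normal-form data, and centrality forces $r=0$, leaving $z=\Delta^k$ with $\Delta^k$ central exactly when $\tau^k=\id$. Thus the minimal positive central element is $\Delta^2$ in general, and $\Delta$ precisely when $\tau=\id$, which is the rank-one case $sl_2$ singled out in (3); proving that the normal-form invariance genuinely forces $r=0$ is the main obstacle, and in type $A$ it can be bypassed by invoking the faithful action of $\fb_\g$ on the punctured disc, whose centre is generated by the boundary Dehn twist $\Delta^2$.
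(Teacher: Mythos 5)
First, a remark on what you are being compared against: the paper gives no proof of this proposition at all --- it is imported directly from Brieskorn--Saito \cite{BS} (Lemma 5.8 and Satz 7.1) --- so your proposal can only be judged against that source and on its own terms. Your framework is the right one (it is exactly the Garside-theoretic machinery that \cite{BS} builds): the positive monoid, $\Delta=T(w_0)$ as the lcm of the $\s_i$, conjugation by $\Delta$ inducing the involution $\tau$. But for part (1) the step you defer as ``the main obstacle'' --- that every $\s_i$ left-divides $\Pi^h$, and again for the complement $\Delta^{-1}\Pi^h$ --- \emph{is} the content of Lemma 5.8 of \cite{BS}; deferring it means nothing has been proved. (The classical route avoids this divisibility check altogether: take a $2$-colouring $I=I_1\sqcup I_2$ of the Coxeter graph and set $\Pi_1=\prod_{i\in I_1}\s_i$, $\Pi_2=\prod_{i\in I_2}\s_i$; by Steinberg/Bourbaki the alternating word $\Pi_1\Pi_2\Pi_1\cdots$ with $h$ factors is a reduced expression of $w_0$, which gives $\Delta^2=(\Pi_1\Pi_2)^h$ in $\fb_\g^+$ directly, and one transports the identity to an arbitrary ordering of the generators because all such products are conjugate in $\fb_\g$ while $\Delta^2$ is conjugation-invariant.) The same criticism applies to the deferred normal-form step in (2)--(3).

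Second, and more seriously, your argument for (2)--(3) rests on a false claim: that $\tau=\id$ ``precisely'' in the rank-one case. In fact $\tau=\id$ if and only if $w_0=-1$ on $\mh^*$, which holds for all of $B_l$, $C_l$, $D_{2n}$, $E_7$, $E_8$, $F_4$ and $G_2$. In every one of these types your own criterion ($\Delta^k$ central iff $\tau^k=\id$) makes $\Delta$ itself central, so the normal-form argument, carried out correctly, yields $Z(\fb_\g)=\langle\Delta\rangle\supsetneq\langle\Delta^2\rangle$ there. This is what Brieskorn--Saito actually prove: the centre is generated by $\Delta^2$ only in the types where $\tau\neq\id$, namely $A_l$ ($l\geq 2$), $D_{2n+1}$, $E_6$, and by $\Delta$ in all other types. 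So your proof of (2) cannot be repaired as written --- indeed, done correctly, it shows that statement (2) itself needs this case distinction (a looseness in the paper's quotation of \cite{BS}; it is harmless downstream, because the paper only ever uses that $\theta=\Pi^h=\Delta^2$ is central, which is true in every type).
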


We then recall different actions of $\fb_\g$ on the finite dimensional $U_q(\g)$-modules, following \cite{CP} and \cite{Saito}.

\begin{enumerate}
\item \textit{Saito's action:}
For $i\in I$, we let $U_q(\g)_i$ denote the subalgebra of $U_q(\g)$ generated by $E_i$, $F_i$ and $K_i^{\pm 1}$ which is isomorphic to $U_{q_i}(\mathfrak{sl}_2)$ and define an endomorphism $S_i\in \textrm{End}(V(n))$ by
\begin{equation}\label{Eq:si}
S_i=\textrm{exp}_{q_i^{-1}}(q_i^{-1}E_iK_i^{-1})\textrm{exp}_{q_i^{-1}}(-F_i)\textrm{exp}_{q_i^{-1}}(q_iE_iK_i)q_i^{H_i(H_i+1)/2},
\end{equation}
where $q_i^{H_i(H_i+1)/2}$ sends $v\in V(n)$ to $q_i^{m(m+1)/2}$ if $K_i\cdot v=q_i^m v$. This operator $S_i\in \textrm{End}(V(n))$ is well-defined as both $E_i$ and $F_i$ act nilpotently on $V(n)$.
\par
If a basis $\{v_0,\cdots,v_n\}$ of $V(n)$ is chosen in such a way that 
\begin{equation}\label{Eq:basis}
E_i\cdot v_0=0,\ \ F_i^{(k)}\cdot v_0=v_k,\ \ K_i\cdot v_0=q_i^nv_0,
\end{equation}
then the action of $S_i$ on $V(n)$ is given by \cite{Saito}:
\begin{equation}\label{Eq:Siaction}
S_i\cdot v_k=(-1)^{n-k}q_i^{(n-k)(k+1)}v_{n-k}.
\end{equation}

In the general case, for a finite dimensional $U_q(\g)$-module $M$, since it is a direct sum of irreducible $U_q(\g)_i$-modules, $S_i\in \textrm{End}(M)$ is well-defined. As these $S_i$ are invertible, we could consider the group generated by $\{S_i\mid i\in I\}$ in $\textrm{End}(M)$. It is proved by Saito (\emph{loc.cit.}) that the assignment $\s_i\mapsto S_i$ extends to a group homomorphism between $\fb_\g$ and the subgroup of $\Aut(M)$ generated by $\{S_i\mid i\in I\}$.
\item \textit{Lusztig's automorphism:}
There is another action of $\fb_\g$ on $U_q(\g)$ constructed by Lusztig. We let $T_i:=T_{i,1}''$ as in \cite{Lusztig}, Section 37.1.3, which satisfy the relations in the Artin braid group associated to the Weyl group of $\g$.
\end{enumerate}

\begin{proposition}[\cite{Saito}]\label{Prop:Sw}
Let $M$ be a finite dimensional $U_q(\g)$-module. Then for any $x\in U_q(\g)$, $T_i(x)=S_ixS_i^{-1}\in\End(M)$.
\end{proposition}

\section{Action of central element}\label{s5}

The main goal of this section is to compute the action of $Z(\mathfrak{B}_\g)$ on $V(\lambda)$.

\subsection{Action on extremal vectors}
The Artin braid group $\mathfrak{B}_\g$ acts on $V(\lambda)$. By Proposition \ref{Prop:Artin}, let $\theta=\Delta^2=\Pi^h$ be the generator of the centre $Z(\mathfrak{B}_\g)$; we compute the action of $\theta$ on the highest weight vector $v_\lambda$ in this subsection.
\par
Let $w_0=s_{i_1}\cdots s_{i_N}$ be a fixed reduced expression of $w_0$.

\begin{lemma}
The generator $\theta$ of $Z(\mathfrak{B}_\g)$ admits the following expression:
$$\theta=\s_{i_1}\cdots \s_{i_N}\s_{i_N}\cdots \s_{i_1}.$$
\end{lemma}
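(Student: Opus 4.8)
The plan is to combine the defining property of the Garside element with the standard fact that conjugation by $T(w_0)$ (equivalently by $\Delta$) realizes a symmetry of the Dynkin diagram, and then to exploit the identity $\theta = \Delta^2$ from Proposition \ref{Prop:Artin}. The statement to prove is that
$$\theta = \s_{i_1}\cdots\s_{i_N}\s_{i_N}\cdots\s_{i_1},$$
where $w_0 = s_{i_1}\cdots s_{i_N}$ is the fixed reduced expression. Since $\Delta = T(w_0) = \s_{i_1}\cdots\s_{i_N}$ by definition, the claim is exactly that $\Delta^2 = \s_{i_1}\cdots\s_{i_N}\s_{i_N}\cdots\s_{i_1}$, i.e. that the second factor $\Delta$ can be rewritten in the \emph{reversed} word $\s_{i_N}\cdots\s_{i_1}$.

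First I would recall that $w_0$ is an involution in $W$, so $w_0 = w_0^{-1}$, and a reduced expression for $w_0^{-1}$ is obtained by reversing the one for $w_0$: $w_0 = s_{i_N}\cdots s_{i_1}$ is again reduced. The key point is that the braid lift $T(\cdot)$ is well-defined on $W$ independently of the reduced expression chosen (this is precisely the property recalled just before Proposition \ref{Prop:Artin}). Applying $T$ to the two reduced expressions $w_0 = s_{i_1}\cdots s_{i_N}$ and $w_0 = s_{i_N}\cdots s_{i_1}$ for the \emph{same} element $w_0$ therefore yields
$$\Delta = T(w_0) = \s_{i_1}\cdots\s_{i_N} = \s_{i_N}\cdots\s_{i_1}.$$

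With this identity in hand the proof concludes immediately: by Proposition \ref{Prop:Artin}(1) we have $\theta = \Pi^h = \Delta^2$, and substituting the two equal expressions for $\Delta$ gives
$$\theta = \Delta\cdot\Delta = (\s_{i_1}\cdots\s_{i_N})(\s_{i_N}\cdots\s_{i_1}),$$
which is the desired formula.

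The only genuine subtlety to address carefully is the claim that reversing a reduced word for $w_0$ again gives a reduced word for $w_0$, and that $T$ assigns the same braid element to both. The first part follows from $w_0^{-1}=w_0$ together with the fact that the inverse of a reduced expression $s_{i_1}\cdots s_{i_N}$ is the reversed expression $s_{i_N}\cdots s_{i_1}$ (each $s_i$ being an involution), and reduced length is preserved under inversion. The second part is exactly the reduced-expression independence of $T$ on $W$, already granted in the excerpt. I expect no substantive obstacle here; the whole argument is a short manipulation resting on the well-definedness of the braid lift and on Proposition \ref{Prop:Artin}(1).
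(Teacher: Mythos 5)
Your proof is correct, but it takes a genuinely different route from the paper's. You identify the reversed word $\s_{i_N}\cdots\s_{i_1}$ with $\Delta$ itself, by combining the involutivity of $w_0$ (so that $s_{i_N}\cdots s_{i_1}$ is again a reduced expression for $w_0$) with the reduced-expression independence of the lift $T$ (Matsumoto's theorem, which the paper grants when it asserts $T(w)$ is well defined); the identity $\theta=\Delta^2$ from Proposition~\ref{Prop:Artin} then yields the palindromic expression immediately. The paper never identifies the reversed word with $\Delta$: instead it shows the palindromic word is central, using the Garside conjugation property $\Delta\s_t=\s_{l-t}\Delta$ (more precisely $\Delta\s_t\Delta^{-1}=\s_{\nu(t)}$ for the diagram automorphism $\nu$ induced by $-w_0$), and then concludes ``as both sides have the same length'' --- an appeal to the facts that $Z(\fb_\g)$ is generated by $\Delta^2$ and that the length homomorphism $\fb_\g\to\mz$ forces a central positive word of length $2N$ to be $\Delta^2$ itself. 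Your argument is arguably cleaner: it avoids the diagram automorphism and makes the length-comparison step unnecessary, at the small price of invoking $w_0^{-1}=w_0$ and the behaviour of reduced words under inversion. What the paper's route buys is the structural observation that conjugation by $\Delta$ permutes the generators, which it reuses implicitly, and it proves centrality directly without reference to word reversal. One caveat common to both arguments: what is actually proved is $\Delta^2=\s_{i_1}\cdots\s_{i_N}\s_{i_N}\cdots\s_{i_1}$, and calling $\theta=\Delta^2$ ``the generator'' of the centre is accurate only for $\g\not\cong sl_2$ (Proposition~\ref{Prop:Artin}(3)); the identity itself remains valid in that case as well.
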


\begin{proof}
Since $w_0^2=1$, its inverse $w_0^{-1}=s_{i_N}\cdots s_{i_1}$. Lifting them to $\fb_\g$ gives $\Delta=\s_{i_N}\cdots \s_{i_1}=\s_{i_1}\cdots \s_{i_N}$, hence $\theta=\Delta^2$ has the desired form.
\end{proof}

Return to our situation, when acting on $V(\lambda)$, the central element $\theta\in Z(\fb_\g)$ has the following expression
$$\theta = S_{i_1}\cdots S_{i_{N-1}}S_{i_{N}}S_{i_N}S_{i_{N-1}}\cdots S_{i_{1}}.$$
\begin{proposition}\label{Prop:theta}
We have $\theta\cdot v_\lambda=(-1)^{(\lambda,2\rho^\vee)} q^{(\lambda,2\rho)}v_\lambda.$
\end{proposition}

\begin{proof}
Since $\theta\cdot v_\lambda=S_{i_1}\cdots S_{i_{N-1}}S_{i_{N}}S_{i_N}S_{i_{N-1}}\cdots S_{i_{1}}v_\lambda$, we apply repeatly the formula (\ref{Eq:si}) to compute the coefficient before $v_\lambda$.
\par
Applying $S_{i_1}$ on $v_\lambda$ gives 
$$S_{i_1}\cdot v_\lambda=(-1)^{(\lambda,\alpha_{i_1}^\vee )}q^{(\lambda,\alpha_{i_1})}v_{s_{i_1}(\lambda)}.$$
The action of $S_{i_2}$ on $v_{s_{i_1}(\lambda)}$ gives
$$S_{i_2}\cdot v_{s_{i_1}(\lambda)}=(-1)^{(s_{i_1}(\lambda),\alpha_{i_2}^\vee)} q^{(s_{i_1}(\lambda),\alpha_{i_2})}v_{s_{i_2}s_{i_1}(\lambda)}=(-1)^{(\lambda,s_{i_1}(\alpha_{i_2}^\vee))} q^{(\lambda,s_{i_1}(\alpha_{i_2}))}v_{s_{i_2}s_{i_1}(\lambda)}.$$
Repeat this procedure and notice that by (\ref{Eq:si}), for $k=1,\cdots,N$,
$$\text{the action of}\ \ S_{i_k}\ \ \text{fixes}\ \ v_{s_{i_{k+1}}\cdots s_{i_N} s_{i_N}\cdots s_{i_1}(\lambda)},$$
we obtain that $\theta\cdot v_\lambda=\mu v_\lambda$ where $\mu=(-1)^{\mu_1}q^{\mu_2}$,
$$\mu_1=\sum_{r=1}^N (\lambda,s_{i_1}\cdots s_{i_{r-1}}(\alpha_{i_r}^\vee))\ \ \text{and}\ \  \mu_2=\sum_{r=1}^N (\lambda,s_{i_1}\cdots s_{i_{r-1}}(\alpha_{i_r})).$$
When $r$ runs from $1$ to $N$, $s_{i_1}\cdots s_{i_{r-1}}(\alpha_{i_r})$ runs through the positive roots, from which the proposition holds.
\end{proof}

As a central element, $\theta$ acts by the same constant on each $\fb_\g$-orbit in $V(\lambda)$.

\subsection{Central automorphism action}
Let $U_q^{\geq 0}(\g)$, (resp. $U_q^{\leq 0}(\g); U_q^{<0}(\g)$) denote the sub-algebra of $U_q(\g)$ generated by $E_i,K_i^{\pm 1}$ (resp. $F_i, K_i^{\pm 1}$; $F_i$). We compute the action of $T_{w_0}^2$ on PBW root vectors of $U_q^{\leq 0}(\g)$ in this subsection. It is known that $T_{w_0}$ permutes $U_q^{\geq 0}(\g)$ and $U_q^{\leq 0}(\g)$, so $T_{w_0}^2$ is an automorphism of $U_q^{\leq 0}(\g)$ and of $U_q^{\geq 0}(\g)$.
\par
For $i\in I$, we let $i'$ denote the index satisfying $w_0(\alpha_i)=\alpha_{i'}$.

\begin{lemma}
For $i\in I$, the following identities hold:
$$T_{w_0}^2(E_i)=q^{(\alpha_i,\alpha_i)}K_i^{-2}E_i,\ \ T_{w_0}^2(F_i)=q^{(\alpha_i,\alpha_i)}K_i^2F_i,\ \ T_{w_0}^2(K_i)=K_i.$$
\end{lemma}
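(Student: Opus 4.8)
The plan is to compute $T_{w_0}$ on each Chevalley generator first and then square. The action on the torus is immediate: since $T_w(K_\mu)=K_{w\mu}$ and $-w_0$ permutes the simple roots with $w_0(\alpha_i)=-\alpha_{\hat i}$ and $\hat{\hat i}=i$, one gets $T_{w_0}^2(K_i)=K_{w_0^2\alpha_i}=K_i$ at once. For $E_i$ and $F_i$ the text already records that $T_{w_0}$ interchanges $U_q^{\geq 0}(\g)$ and $U_q^{\leq 0}(\g)$; combined with the fact that $T_{w_0}$ shifts weights by $w_0$, this forces $T_{w_0}(E_i)$ to be a weight-$(-\alpha_{\hat i})$ element of $U_q^{\leq 0}(\g)$, hence of the form $F_{\hat i}\cdot(\text{torus element})$. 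The subtlety, and the reason the defining relations cannot finish the argument by themselves, is that the exact torus factor is invisible to the relations of $U_q(\g)$: the substitution $E_i\mapsto K_\nu E_i$, $F_i\mapsto K_{-\nu}F_i$ preserves every relation, so the precise power of $K_{\hat i}$ must be extracted from the internal structure of $T_{w_0}$ rather than from formal manipulation.

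To pin down that factor I would use a length-additive factorisation of $w_0$, which is the braid-theoretic shadow of the relation $\Delta\s_i=\s_{\hat i}\Delta$ used earlier. Put $w'=w_0 s_i$; then $s_{\hat i}w_0=w_0 s_i$, so $w_0=s_{\hat i}w'$ with $\ell(w_0)=\ell(w')+1$ and therefore $T_{w_0}=T_{\hat i}T_{w'}$. Moreover $w'(\alpha_i)=w_0(-\alpha_i)=\alpha_{\hat i}$ is a \emph{simple} root and $\ell(w's_i)=\ell(w_0)>\ell(w')$, so by the standard property of Lusztig's automorphisms (if $w(\alpha_i)$ is simple and $\ell(ws_i)>\ell(w)$, then $T_w$ sends $E_i$, $F_i$ to the simple root vectors $E_{w(\alpha_i)}$, $F_{w(\alpha_i)}$) we obtain $T_{w'}(E_i)=E_{\hat i}$ and $T_{w'}(F_i)=F_{\hat i}$. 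Applying $T_{\hat i}$ and using the defining formulas $T_{\hat i}(E_{\hat i})=-F_{\hat i}K_{\hat i}$, $T_{\hat i}(F_{\hat i})=-K_{\hat i}^{-1}E_{\hat i}$ yields
$$T_{w_0}(E_i)=-F_{\hat i}K_{\hat i},\qquad T_{w_0}(F_i)=-K_{\hat i}^{-1}E_{\hat i}.$$

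Squaring is then routine bookkeeping with the involution $\hat{\hat i}=i$, the equality $(\alpha_i,\alpha_i)=(\alpha_{\hat i},\alpha_{\hat i})$, and $T_{w_0}(K_{\hat i})=K_{w_0\alpha_{\hat i}}=K_i^{-1}$. For instance
$$T_{w_0}^2(E_i)=-T_{w_0}(F_{\hat i})\,T_{w_0}(K_{\hat i})=-(-K_i^{-1}E_i)(K_i^{-1})=K_i^{-1}E_iK_i^{-1}=q^{(\alpha_i,\alpha_i)}K_i^{-2}E_i,$$
where the last step moves one $K_i^{-1}$ past $E_i$ via $K_iE_iK_i^{-1}=q^{(\alpha_i,\alpha_i)}E_i$; the same computation applied to $F_i$ gives $T_{w_0}^2(F_i)=K_iF_iK_i=q^{(\alpha_i,\alpha_i)}K_i^2F_i$, and the torus case was already settled. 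The genuine obstacle is the determination of the torus factor in the middle step: the weight plus triangular-decomposition argument only produces the shape $F_{\hat i}\cdot K_{?}$, and it is the reduction $w_0=s_{\hat i}(w_0s_i)$ together with length-additivity — not any juggling of the defining relations — that fixes it as $-F_{\hat i}K_{\hat i}$.
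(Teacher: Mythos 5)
Your argument is correct, and it takes a genuinely different route from the paper's. The paper obtains the first-power formulas $T_{w_0}(E_i)=-F_{\hat{i}}K_{\hat{i}}$, $T_{w_0}(F_i)=-K_{\hat{i}}^{-1}E_{\hat{i}}$, $T_{w_0}(K_i)=K_{\hat{i}}^{-1}$ by appealing to ``a similar computation as in \cite{Lusztig2}, Section 5.7,'' and then squares exactly as you do; your squaring step (including $T_{w_0}(K_{\hat{i}})=K_i^{-1}$ and the commutation $K_i^{-1}E_iK_i^{-1}=q^{(\alpha_i,\alpha_i)}K_i^{-2}E_i$) matches the paper's final line. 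What you do differently is to \emph{prove} the first-power formulas rather than quote them: the factorisation $w_0=s_{\hat{i}}\,(w_0s_i)$ with $\ell(w_0)=\ell(w_0s_i)+1$ gives $T_{w_0}=T_{\hat{i}}T_{w_0s_i}$, the element $w_0s_i$ sends $\alpha_i$ to the simple root $\alpha_{\hat{i}}$, and the standard compatibility property of Lusztig's operators ($T_w(E_i)=E_{w(\alpha_i)}$, $T_w(F_i)=F_{w(\alpha_i)}$ whenever $w(\alpha_i)$ is simple, see \cite{Lusztig}) then pins down the torus factor. This costs you one quotable lemma but makes the proof self-contained, and it exhibits the same mechanism (conjugation by the Garside element, $\Delta\sigma_i=\sigma_{\hat{i}}\Delta$) that the paper exploits in Section \ref{s5}; the paper's version is shorter but leaves Lusztig's computation as a black box. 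Incidentally, your reading $w_0(\alpha_i)=-\alpha_{\hat{i}}$ is the correct one: the paper's definition of $\hat{i}$ and its closing phrase ``$w_0(\alpha_{\hat{i}})=\alpha_i$'' carry a sign slip.

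One side remark in your proposal is false, although nothing in your argument depends on it: the substitution $E_i\mapsto K_\nu E_i$, $F_i\mapsto K_{-\nu}F_i$ does \emph{not} preserve the defining relations. Already for $sl_2$ with $\nu=\alpha$ one computes $[KE,K^{-1}F]=q^{2}[E,F]\neq[E,F]$, and for $i\neq j$ the relation $[E_i,F_j]=0$ is sent to $\bigl(q^{(\nu,\alpha_i)}-q^{(\nu,\alpha_j)}\bigr)E_iF_j$, which is nonzero in general. The correct way to make your point that the relations alone cannot fix the torus factor is either via the genuine rescaling automorphisms $E_i\mapsto c_iE_i$, $F_i\mapsto c_i^{-1}F_i$, $K_i\mapsto K_i$ (which show that scalar normalisations are undetermined), or by noting that Lusztig's variants $T'_{i,\pm1}$, $T''_{i,\pm1}$ all satisfy the braid relations yet place the torus factor differently. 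Your conclusion --- that the factor must be extracted from the structure of $T_{w_0}$ itself, which your factorisation argument then accomplishes --- is unaffected.
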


\begin{proof}[Proof]
A similar computation as in \cite{Lusztig2}, Section 5.7 gives
$$T_{w_0}(E_i)=-F_{i'}K_{i'},\ \ T_{w_0}(F_i)=-K_{i'}^{-1}E_{i'},\ \ T_{w_0}(K_i)=K_{i'}^{-1}.$$
Then the lemma is clear as $w_0(\alpha_{i'})=\alpha_i$.
\end{proof}

For $k=1,\cdots,N$, we denote $\beta_k:=s_{i_1}\cdots s_{i_{k-1}}(\alpha_{i_k})$, then $\Delta_+=\{\beta_1,\cdots,\beta_N\}$. We turn to consider the action of $T_{w_0}^2$ on a root vector $F_{\beta_k}=T_{i_1}\cdots T_{i_{k-1}}(F_{i_k})$. As $T_{w_0}^2\in Z(\fb_\g)$ (here $\fb_\g$ is the Artin braid group generated by $\{T_i\mid i\in I\}$),
\begin{eqnarray*}
T_{w_0}^2T_{i_1}\cdots T_{i_{k-1}}(F_{i_k}) &=& T_{i_1}\cdots T_{i_{k-1}} T_{w_0}^2(F_{i_k})\\
&=& T_{i_1}\cdots T_{i_{k-1}}\left(q^{(\alpha_{i_k},\alpha_{i_k})}K_{i_k}^2 F_{i_k}\right) \\
&=& q^{(\beta_k,\beta_k)}K_{\beta_k}^2 F_{\beta_k},
\end{eqnarray*} 
where $q^{(\alpha_{i_k},\alpha_{i_k})}=q^{(\beta_k,\beta_k)}$ since the bilinear form is $W$-invariant.
\par
In general, we have
\begin{eqnarray*}
T_{w_0}^2(F_{\beta_{j_1}}\cdots F_{\beta_{j_t}}) &=& q^{\sum_{k=1}^t (\beta_{j_k},\beta_{j_k})}K_{\beta_{j_1}}^2F_{\beta_{j_1}}\cdots K_{\beta_{j_t}}^2F_{\beta_{j_t}}\\
&=&
q^{\sum_{k=1}^t (\beta_{j_k},\beta_{j_k})+\sum_{i<k}2(\beta_{j_i},\beta_{j_k})}K_{\beta_{j_1}}^2\cdots K_{\beta_{j_t}}^2F_{\beta_{j_1}}\cdots F_{\beta_{j_t}}\\
&=& q^{(\beta,\beta)}K_\beta^2 F_{\beta_{j_1}}\cdots F_{\beta_{j_t}},
\end{eqnarray*}
where $\beta=\beta_{j_1}+\cdots+\beta_{j_t}$. These computations give the following

\begin{proposition}\label{Prop:w0}
Let $x_\beta\in U_q^{<0}(\g)_{-\beta}$. Then 
$$T_{w_0}^2(x_\beta)=q^{(\beta,\beta)}K_\beta^2 x_\beta.$$
\end{proposition}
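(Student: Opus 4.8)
The plan is to prove Proposition \ref{Prop:w0} by reducing the general element $x_\beta\in U_q^{<0}(\g)_{-\beta}$ to the PBW monomials $F_{\beta_{j_1}}\cdots F_{\beta_{j_t}}$ already treated in the preceding display, exploiting the fact that $T_{w_0}^2$ is a $\mc(q)$-linear automorphism. Since the monomials $F_{\beta_{j_1}}\cdots F_{\beta_{j_t}}$ with $\beta_{j_1}+\cdots+\beta_{j_t}=\beta$ span the weight space $U_q^{<0}(\g)_{-\beta}$, and each such monomial is an eigenvector of $T_{w_0}^2$ with the \emph{same} eigenvalue $q^{(\beta,\beta)}K_\beta^2$ (here the operator $K_\beta^2\cdot$ is understood as left multiplication, acting uniformly on everything of weight $-\beta$), linearity immediately propagates the eigenvalue identity from the basis to all of $U_q^{<0}(\g)_{-\beta}$. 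So the proof is essentially the observation that the computation already carried out is basis-independent.

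First I would recall the relevant reduction steps: the preceding calculation establishes $T_{w_0}^2(F_{\beta_k})=q^{(\beta_k,\beta_k)}K_{\beta_k}^2F_{\beta_k}$ on each root vector, using the Lemma on $T_{w_0}^2(F_i)=q^{(\alpha_i,\alpha_i)}K_i^2F_i$ together with the centrality of $T_{w_0}^2$ in $\fb_\g$ so that it commutes past $T_{i_1}\cdots T_{i_{k-1}}$. Then I would invoke the multiplicativity of $T_{w_0}^2$ together with the commutation relations $K_{\beta_i}F_{\beta_j}=q^{-(\beta_i,\beta_j)}F_{\beta_j}K_{\beta_i}$ to collect all the $K$-factors to the left and all the cross terms $2(\beta_{j_i},\beta_{j_k})$ for $i<k$ into the exponent, yielding the telescoped exponent $\sum_k(\beta_{j_k},\beta_{j_k})+\sum_{i<k}2(\beta_{j_i},\beta_{j_k})=(\beta,\beta)$ where $\beta=\sum_k\beta_{j_k}$. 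This is exactly the three-line display that precedes the proposition, so I would simply cite it rather than redo the bookkeeping.

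The key final step is to pass from monomials to an arbitrary $x_\beta$. Write $x_\beta=\sum_{\mathbf{j}}a_{\mathbf{j}}\,F_{\beta_{j_1}}\cdots F_{\beta_{j_t}}$ as a finite $\mc(q)$-linear combination of PBW monomials, where the sum ranges only over tuples $\mathbf{j}$ with $\beta_{j_1}+\cdots+\beta_{j_t}=\beta$ since $x_\beta$ is homogeneous of weight $-\beta$. Applying $T_{w_0}^2$ and using $\mc(q)$-linearity gives
$$T_{w_0}^2(x_\beta)=\sum_{\mathbf{j}}a_{\mathbf{j}}\,q^{(\beta,\beta)}K_\beta^2\,F_{\beta_{j_1}}\cdots F_{\beta_{j_t}}=q^{(\beta,\beta)}K_\beta^2\,x_\beta,$$
where the scalar $q^{(\beta,\beta)}$ and the factor $K_\beta^2$ are identical across all monomials precisely because every monomial has total weight $-\beta$. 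This completes the argument.

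I do not expect a serious obstacle here, since the heavy lifting is done by the Lemma and the monomial computation already recorded. The only point requiring a little care — and the nearest thing to a subtle step — is confirming that the homogeneity of $x_\beta$ forces every PBW monomial appearing in its expansion to carry the same total weight $-\beta$, so that the common eigenvalue $q^{(\beta,\beta)}K_\beta^2$ genuinely factors out; this relies on the fact that the PBW basis is weight-graded, with $F_{\beta_{j_1}}\cdots F_{\beta_{j_t}}$ of weight $-(\beta_{j_1}+\cdots+\beta_{j_t})$. Once that grading is invoked, the proposition follows by linearity.
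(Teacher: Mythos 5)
Your proposal is correct and follows essentially the same route as the paper: the Lemma on $T_{w_0}^2(F_i)$, centrality of $T_{w_0}^2$ in $\fb_\g$ to handle the root vectors $F_{\beta_k}$, multiplicativity plus the $K$--$F$ commutation relations to get the monomial formula with exponent $\sum_k(\beta_{j_k},\beta_{j_k})+\sum_{i<k}2(\beta_{j_i},\beta_{j_k})=(\beta,\beta)$. The only difference is that you spell out the final passage from PBW monomials to a general weight vector $x_\beta$ by linearity and weight-homogeneity, a step the paper leaves implicit when it says ``these calculations give the following.''
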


\subsection{Central element acting on Weyl group orbits}

\begin{proposition}\label{Prop:main}
For any non-zero vector $v\in V(\lambda)_0$,
$$\theta\cdot v=(-1)^{(\lambda,2\rho^\vee)} q^{(\lambda,\lambda+2\rho)}v.$$
\end{proposition}

\begin{proof}[Proof]
It should be remarked that if $\lambda$ is not in the root lattice $\mathcal{Q}$, there will be no non-zero vector of weight $0$. So if $v\in V(\lambda)_0$ is a non-zero vector, $\lambda\in \mathcal{Q}_+$ and there exists $x\in U_q^{<0}(\g)_{-\lambda}$ such that $v=x\cdot v_\lambda$.
\par
By Proposition \ref{Prop:Sw}, \ref{Prop:theta} and \ref{Prop:w0}, we have the following computation:
\begin{eqnarray*}
\theta\cdot v &=& S_{i_1}\cdots S_{i_N}S_{i_N}\cdots S_{i_1}x\cdot v_\lambda\\
&=& T_{i_1}\cdots T_{i_N}T_{i_N}\cdots T_{i_1}(x)\theta\cdot v_\lambda\\
&=& T_{w_0}^2(x)\theta\cdot v_\lambda\\
&=& q^{(\lambda,\lambda)}K_\lambda^2 x\theta\cdot v_\lambda\\
&=& (-1)^{(\lambda,2\rho^\vee)} q^{(\lambda,\lambda+2\rho)}v.
\end{eqnarray*}
\end{proof}

Moreover, this method can be applied to compute the action of the central element on each $\fb_\g$-orbit. For example, if $\g=\mathfrak{sl}_3$ and $\lambda=\alpha_1+\alpha_2$, then $V(\lambda)$ is the adjoint representation of dimension $8$. The central element $\theta$ acts as $q^4$ on the $1$-dimensonal weight spaces and $q^6$ on the zero-weight space.

\subsection{Trace of Coxeter element}
We compute the trace of the Coxeter element acting on $V(\lambda)$.
\par
Let $\textrm{wt}(V(\lambda))$ denote the set of weights appearing in $V(\lambda)$. As
$$S_i(V(\lambda)_\mu)\subset V(\lambda)_{\mu-(\mu,\alpha_i^\vee)\alpha_i},$$
the action of the Artin braid group $\fb_\g$ on $\textrm{wt}(V(\lambda))$ coincides with that of the Weyl group $W$. The action of $\Pi$ is the same as the Coxeter element $c=s_1\cdots s_l\in W$ which has no fixed point on $\textrm{wt}(V(\lambda))\backslash\{0\}$.
\par
A standard proof of the statement above can be found in \cite{Bour4-6}, Chapitre V, n$^\textrm{o}$ 6.2.
\par
As an immediate consequence of this observation, we have
$$\Tr (\Pi,V(\lambda))=\Tr (\Pi,V(\lambda)_0).$$
Moreover, generators of $\fb_\g$ preserve the zero-weight space $V(\lambda)_0$, so we can also look $\fb_\g$ as a subgroup of $\textrm{Aut}(V(\lambda)_0)$.
\par
Notice that $\theta=\Pi^h$, so by Proposition \ref{Prop:main}, $\Pi^h$ acts as the scalar $(-1)^{(\lambda,2\rho^\vee)}q^{(\lambda,\lambda+2\rho)}$ on $V(\lambda)_0$. If we let $\Lambda$ denote the set of roots of the equation $x^h=(-1)^{(\lambda,2\rho^\vee)}$ in $\mathbb{C}$, then the eigenvalues of $\Pi$ belong to the set 
$$\{y.q^{\frac{(\lambda,\lambda+2\rho)}{h}}|\ y\in\Lambda\}$$
and the trace $\Tr(\Pi,V(\lambda)_0)$ is given by $\delta q^{\frac{(\lambda,\lambda+2\rho)}{h}}$ for some $\delta\in\mc$. As a summary, we have proved that

\begin{proposition}\label{Prop:delta}
There exists a constant $\delta\in\mc$ which independent with $q$ such that 
$$\Tr (\Pi,V(\lambda))=\delta q^{\frac{(\lambda,\lambda+2\rho)}{h}}.$$
\end{proposition}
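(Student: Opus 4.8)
The plan is to leverage everything established in Section \ref{s5} about the central element $\theta = \Pi^h$ together with the eigenvalue structure of the Coxeter element $\Pi$ acting on the zero-weight space $V(\lambda)_0$. The first step is to recall the reduction already proved in this subsection: since $\fb_\g$ acts on $\textrm{wt}(V(\lambda))$ exactly as the Weyl group $W$ acts, and the classical Coxeter element $c = s_1 \cdots s_l$ has no nonzero fixed weight (by the Bourbaki result cited), every weight space $V(\lambda)_\mu$ with $\mu \neq 0$ is moved by $\Pi$ to a distinct weight space. Hence the diagonal blocks of $\Pi$ acting on the $\mu \neq 0$ part vanish, giving $\Tr(\Pi, V(\lambda)) = \Tr(\Pi, V(\lambda)_0)$. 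This confines the entire computation to the finite-dimensional space $V(\lambda)_0$, on which $\fb_\g$ genuinely acts as a subgroup of $\textrm{Aut}(V(\lambda)_0)$.

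Next I would use Proposition \ref{Prop:main}, which tells us that $\Pi^h = \theta$ acts as the scalar $c\, q^{(\lambda,\lambda+2\rho)}$ on $V(\lambda)_0$, with $c \in \{\pm 1\}$ independent of the chosen vector. The key diagonalization argument is then purely linear-algebraic: if an operator $A$ on a finite-dimensional space satisfies $A^h = c\, q^{(\lambda,\lambda+2\rho)} \cdot \id$, then $A$ is diagonalizable and each eigenvalue is an $h$-th root of $c\, q^{(\lambda,\lambda+2\rho)}$. Writing the eigenvalues of $\Pi$ as $y \cdot q^{(\lambda,\lambda+2\rho)/h}$ where $y$ ranges (with multiplicity) over solutions of $y^h = c$, the trace becomes
\begin{equation*}
\Tr(\Pi, V(\lambda)_0) = \left(\sum_{j} y_j\right) q^{(\lambda,\lambda+2\rho)/h} = \delta\, q^{(\lambda,\lambda+2\rho)/h},
\end{equation*}
where $\delta = \sum_j y_j$ is the sum of the relevant roots of unity (scaled by a choice of $h$-th root of $c$). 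Since each $y_j$ is a root of $x^h = c$ and hence a root of unity up to sign, $\delta$ is an algebraic number lying in $\mc$; crucially, it depends only on the combinatorial eigenvalue multiplicities of $\Pi$ on $V(\lambda)_0$ and on $c$, neither of which involves $q$.

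The remaining point requiring care, and the main obstacle, is justifying that $\delta$ is genuinely independent of $q$. The multiplicities with which the various $h$-th roots occur as eigenvalues of $\Pi$ could a priori vary with $q$, which would spoil the conclusion. I would resolve this by invoking the specialization picture from Section \ref{Section:Spe}: the operator $\Pi$ on $V(\lambda)_0$ is defined over $\mathbb{Z}[q,q^{-1}]$ via the integral form, and its characteristic polynomial has coefficients that are Laurent polynomials in $q$. The constraint $\Pi^h = c\, q^{(\lambda,\lambda+2\rho)}\id$ forces the characteristic polynomial to have the rigid form $\prod_j (x - y_j q^{(\lambda,\lambda+2\rho)/h})$, so after the substitution $x \mapsto q^{(\lambda,\lambda+2\rho)/h} z$ the eigenvalue pattern $\{y_j\}$ is frozen as a set of roots of unity with fixed multiplicities, independent of the specialization of $q$. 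Equivalently, one may specialize $q \to 1$, where $\Pi$ becomes the classical Coxeter operator $c$ on $V_1(\lambda)_0$ whose eigenvalues are honest roots of unity computed by Kostant, and observe that the Laurent-polynomial dependence together with the scaling by $q^{(\lambda,\lambda+2\rho)/h}$ leaves no room for $q$-dependence in the coefficient $\delta$. This pins down $\delta \in \mc$ as the trace of the Coxeter element on the classical zero-weight space, completing the proof.
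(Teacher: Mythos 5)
Your proposal is correct and follows essentially the same route as the paper: reduce the trace to $V(\lambda)_0$ via the fixed-point-free action of the Coxeter element on nonzero weights, then use Proposition \ref{Prop:main} to see that $\Pi^h=\theta$ acts as the scalar $c\,q^{(\lambda,\lambda+2\rho)}$, so every eigenvalue of $\Pi$ has the form $y\,q^{(\lambda,\lambda+2\rho)/h}$ with $y^h=c$ and the trace is $\delta\,q^{(\lambda,\lambda+2\rho)/h}$. Your worry about $q$-dependent multiplicities is not actually an issue --- over the field $\mc(q)$ the operator $\Pi$ is a single operator whose eigenvalues are complex constants $y_j$ (roots of $x^h=c$, a polynomial with constant coefficients), so $\delta=\sum_j y_j\in\mc$ automatically --- though your resolution via the integral form and the $q\to 1$ specialization is harmless and in fact anticipates how the paper later identifies $\delta=\ve(\lambda)$ in the proof of Theorem \ref{Thm:main}.
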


\begin{remark}
To make the notation $q^{1/h}$ valid, we should enlarge the base field to $\mc(q^{1/h})$. Since it is nothing but a notational change, we ignore it in the following argument.
\end{remark}

\section{Main theorem}\label{s6}

Let $\wt{W}$ be the Tits extension of the Weyl group $W$ obtained by taking $t_i=1$ in the formula (2.13) of \cite{ML}. In the classical limit $q\ra 1$, the action of $\fb_\g$ on $V(\lambda)$ specializes to an action of $\wt{W}$ on $V_1(\lambda)$ by sending $S_i$ in (\ref{Eq:si}) to $\wt{s}_i=\exp(e_i)\exp(-f_i)\exp(e_i)$.

\par

Let $V_1(\lambda)_0$ be the subspace of $V_1(\lambda)$ of weight $0$. The Weyl group $\wt{W}$ acts on $V_1(\lambda)$ and therefore on $V_1(\lambda)_0$. Let $\bold{c}=\wt{s}_1\cdots \wt{s}_l$ be a Coxeter element in $\wt{W}$.

\begin{theorem}[\cite{Kostant}, Theorem 4.1]\label{Thm:Kostant}
The following identity holds:
$$\left(\prod_{i=1}^l\vp(x^{h\Phi(\alpha_i,\alpha_i)})\right)^{h+1}=\sum_{\lambda\in\pp_+}\Tr (\bold{c},V_1(\lambda)_0)\dim V_1(\lambda)x^{c(\lambda)}.$$
In particular, if $\g$ is simply laced (i.e. of type A,D,E), the identity above has the form
$$\vp(x)^{\dim\g}=\sum_{\lambda\in\pp_+}\Tr (\bold{c},V_1(\lambda)_0)\dim V_1(\lambda)x^{c(\lambda)}.$$
Moreover, $\Tr (\bold{c},V_1(\lambda)_0)\in\{-1,0,1\}$.
\end{theorem}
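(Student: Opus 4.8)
The plan is to compute the quantum trace $\Tr(\Pi\ts\id,\mc_q[G])$ in two different ways and to extract Kostant's classical identity by specializing $q\to 1$. On one side the trace is evaluated representation by representation, using the machinery of Section \ref{s5}; on the other side it is evaluated directly on the coordinate algebra, where it should assemble into the $\eta$-product. Equating the two expressions and translating the variable $q$ into Kostant's variable $x$ then yields the stated formula.

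For the representation side, I would first use that $\Pi\ts\id$ acts componentwise on $\mc_q[G]=\bigoplus_{\lambda\in\pp_+}V(\lambda)\ts V(\lambda)^*$ and that $\id$ contributes the factor $\dim V(\lambda)$, so that $\Tr(\Pi\ts\id,\mc_q[G])=\sum_{\lambda\in\pp_+}\Tr(\Pi,V(\lambda))\dim V(\lambda)$. By the weight combinatorics recalled just before Proposition \ref{Prop:delta}, $\Pi$ has no nonzero fixed weight, so $\Tr(\Pi,V(\lambda))=\Tr(\Pi,V(\lambda)_0)$, and Proposition \ref{Prop:delta} gives $\Tr(\Pi,V(\lambda))=\delta_\lambda\,q^{(\lambda,\lambda+2\rho)/h}$ with $\delta_\lambda$ independent of $q$. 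The crucial identification is that specializing $q\to 1$ as in Section \ref{Section:Spe} sends $\Pi$ to the Weyl Coxeter element $c$ and $V(\lambda)$ to $V_1(\lambda)$, whence $\delta_\lambda=\Tr(c,V_1(\lambda)_0)$ and $\dim V(\lambda)=\dim V_1(\lambda)$. Since the Killing form $\Phi$ and the normalized form $(\cdot,\cdot)$ are proportional on a simple Lie algebra, say $\Phi=\gamma(\cdot,\cdot)$, the substitution $q=x^{h\gamma}$ turns $q^{(\lambda,\lambda+2\rho)/h}$ into $x^{\gamma(\lambda,\lambda+2\rho)}=x^{c(\lambda)}$ and $q^{(\alpha_i,\alpha_i)}$ into $x^{h\Phi(\alpha_i,\alpha_i)}$, so that the representation side of the quantum trace is exactly the right-hand side of Kostant's identity; in the simply laced case all $(\alpha_i,\alpha_i)$ coincide and the product collapses to a single power of $\vp$.

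The heart of the argument, and the step I expect to be the main obstacle, is the direct evaluation of $\Tr(\Pi\ts\id,\mc_q[G])$ as the $\eta$-product $\left(\prod_{i=1}^l\vp(q^{(\alpha_i,\alpha_i)})\right)^{h+1}$, which is the content of the main theorem. Here one must understand how $\Pi$ acts across the full coordinate algebra rather than on a single $V(\lambda)$; the natural route is through a PBW-type model in which the trace factors over the positive roots, the $l$ Coxeter orbits of size $h$ in the root system together with the rank-$l$ Cartan part accounting for the exponent $h+1$ (consistently with $\dim\g=l(h+1)$). The $sl_2$ case is instructive: Lemma \ref{Lem:Siaction} forces the trace of $S_1$ on $V(n)$ to vanish unless $n$ is even and to equal $(-1)^{n/2}q^{(n/2)(n/2+1)}$ otherwise, and summing against $\dim V(n)=n+1$ reproduces $\vp(q^2)^3$, which is precisely Jacobi's cubic identity and the $A_1$ instance of the theorem. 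Finally, the refinement $\Tr(c,V_1(\lambda)_0)\in\{-1,0,1\}$ does not follow from the product computation alone, since the full coefficients are large $\tau$-like integers; it requires the special structure of the Coxeter element on the zero-weight space, and I would isolate it as a separate point, invoking Kostant's analysis of the principal element rather than the trace identity itself.
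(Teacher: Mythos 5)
Your proposal is circular relative to the logical structure of this paper, and the circularity sits exactly where you yourself flag ``the main obstacle.'' You propose to prove Kostant's identity by computing $\Tr(\Pi\ts\id,\mc_q[G])$ in two ways: once representation-by-representation, specializing $q\to 1$ to identify the coefficients $\delta_\lambda$ with $\Tr(c,V_1(\lambda)_0)$, and once ``directly'' as the product $\left(\prod_{i=1}^l\vp(q^{(\alpha_i,\alpha_i)})\right)^{h+1}$. The first computation is sound and is essentially the paper's proof of Theorem \ref{Thm:main}(1): Proposition \ref{Prop:delta} plus the specialization of Section \ref{Section:Spe} give $\Tr(\Pi,V(\lambda))=\ve(\lambda)q^{(\lambda,\lambda+2\rho)/h}$, and your substitution $q=x^{h\gamma}$ is the same bookkeeping as the paper's constant $r_\g$. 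But the second computation --- the product evaluation --- is never established. You gesture at ``a PBW-type model in which the trace factors over the positive roots,'' with $l$ Coxeter orbits of size $h$ plus the Cartan part accounting for the exponent $h+1$; this is a numerological heuristic explaining $\dim\g=l(h+1)$, not an argument. The algebra $\mc_q[G]$ is a direct sum of the spaces $V(\lambda)\ts V(\lambda)^*$, and no $\Pi$-equivariant factorization of its trace over positive roots is exhibited. In fact, in the paper the dependency runs in the opposite direction: the product formula, Theorem \ref{Thm:main}(2), is \emph{deduced from} Theorem \ref{Thm:Kostant} together with part (1); the paper nowhere proves the product formula independently. So your two-sided trace computation has only one side.

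Note also that the statement you were asked to prove is, in this paper, a citation: it is Kostant's classical result, established in \cite{Kostant} by classical Lie-theoretic analysis of the Coxeter (principal) element and Macdonald's identities, with no quantum input. Your closing remark --- that the refinement $\Tr(c,V_1(\lambda)_0)\in\{-1,0,1\}$ ``requires Kostant's analysis of the principal element'' --- concedes that the essential input is the content of \cite{Kostant} itself. If you could supply an independent quantum evaluation of $\Tr(\Pi\ts\id,\mc_q[G])$ as an infinite product, say via a denominator-type identity on the quantum coordinate algebra, your strategy would upgrade to a genuinely new proof of Kostant's theorem; as written, that crucial step is assumed rather than proved.
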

We denote $\ve(\lambda):=\Tr (\bold{c},V_1(\lambda)_0)$.
\par

We will give a compact form of the identity in Theorem \ref{Thm:Kostant} using quantum groups.

Let 
$$\mc_q[G]=\bigoplus_{\lambda\in\pp_+}\textrm{End}(V(\lambda))=\bigoplus_{\lambda\in\pp_+}V(\lambda)\ts V(\lambda)^*$$
be the quantum coordinate algebra which can be viewed as a deformation of the algebra of regular functions of a semi-simple algebraic group $G$.
\par
It is clear that there is a canonical embedding
$$\bigoplus_{\lambda\in\mathcal{P}_+}\textrm{End}(V(\lambda))\ts \textrm{End}(V(\lambda)^*)\ra \textrm{End}(\mc_q[G]).$$
The following is the main theorem of the paper.

\begin{theorem}\label{Thm:main}
Let $\Pi$ be a Coxeter element in the Artin braid group $\fb_\g$ and $V(\lambda)_0$ be the zero-weight space in $V(\lambda)$ for $\lambda\in\pp_+$.
\begin{enumerate}
\item We have
$$\Tr (\Pi,V(\lambda))=\Tr (\Pi,V(\lambda)_0)=\ve(\lambda) q^{r_\g c(\lambda)}.$$
\item The following identity holds
$$\Tr (\Pi\ts\id,\mc_q[G])=\left(\prod_{i=1}^l\vp(q^{(\alpha_i,\alpha_i)})\right)^{h+1},$$
where we look $\Pi\ts\id$ as in $\End(\mc_q[G])$ through the embedding above.
\item In particular, if $\g$ is simply laced, i.e., of type $A,D,E$, then
$$\Tr (\Pi\ts\id,\mc_q[G])=\vp(q^2)^{\dim\g}.$$
\end{enumerate}
\end{theorem}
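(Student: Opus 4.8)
The plan is to reduce all three statements to Proposition \ref{Prop:delta} and Kostant's Theorem \ref{Thm:Kostant} by a specialization argument, so that the only real content is pinning down the undetermined constant and matching exponents. For part (1), Proposition \ref{Prop:delta} already gives $\Tr(\Pi,V(\lambda))=\delta\, q^{(\lambda,\lambda+2\rho)/h}$ with $\delta\in\mc$ independent of $q$, and the discussion preceding it identifies this with $\Tr(\Pi,V(\lambda)_0)$. First I would determine $\delta$ by specializing $q\ra 1$. By Section \ref{Section:Spe}, under this specialization $V(\lambda)$ degenerates to the classical module $V_1(\lambda)$, each $S_i$ degenerates to the simple reflection $s_i$, and hence $\Pi$ degenerates to the classical Coxeter element $c$ acting on $V_1(\lambda)_0$. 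Since $\delta$ does not depend on $q$, evaluating the expression $\delta\, q^{(\lambda,\lambda+2\rho)/h}$ termwise at $q=1$ forces $\delta=\Tr(c,V_1(\lambda)_0)=\ve(\lambda)$. It then remains to rewrite the exponent: using $k\Phi(x,y)=(x,y)$ and $r_\g=k/h$, one computes
$$(\lambda,\lambda+2\rho)=(\lambda,\lambda)+2(\lambda,\rho)=k\big(\Phi(\lambda+\rho,\lambda+\rho)-\Phi(\rho,\rho)\big)=k\,c(\lambda),$$
whence $(\lambda,\lambda+2\rho)/h=(k/h)\,c(\lambda)=r_\g\, c(\lambda)$, which establishes part (1).

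For part (2) I would decompose the trace over the direct sum. Since $\Pi\ts\id$ preserves each summand $V(\lambda)\ts V(\lambda)^*$ and factors as a tensor product, the trace multiplies and $\Tr(\id,V(\lambda)^*)=\dim V(\lambda)^*=\dim V(\lambda)$, so by part (1),
$$\Tr(\Pi\ts\id,\mc_q[G])=\sum_{\lambda\in\pp_+}\Tr(\Pi,V(\lambda))\dim V(\lambda)=\sum_{\lambda\in\pp_+}\ve(\lambda)\dim V(\lambda)\, q^{r_\g c(\lambda)}.$$
Next I invoke Theorem \ref{Thm:Kostant}, recalling that quantum and classical dimensions agree, $\dim V(\lambda)=\dim V_1(\lambda)$, and that $\ve(\lambda)=\Tr(c,V_1(\lambda)_0)$ by definition. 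Substituting $x=q^{r_\g}$ into Kostant's identity makes its right-hand side coincide exactly with the sum above. On the left-hand side the argument of each factor becomes $q^{r_\g h\,\Phi(\alpha_i,\alpha_i)}$; since $r_\g h=k$ and $k\Phi(\alpha_i,\alpha_i)=(\alpha_i,\alpha_i)$, this is precisely $q^{(\alpha_i,\alpha_i)}$, yielding
$$\Tr(\Pi\ts\id,\mc_q[G])=\left(\prod_{i=1}^l\vp(q^{(\alpha_i,\alpha_i)})\right)^{h+1}.$$

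For part (3), in the simply-laced case the normalization gives $(\alpha_i,\alpha_i)=2$ for every $i$, so the product collapses to $\vp(q^2)^l$ and the formula reads $\vp(q^2)^{l(h+1)}$. The standard identity $\dim\g=l+|\Delta|=l+lh=l(h+1)$, valid for any simple $\g$, then converts the exponent to $\dim\g$, giving $\vp(q^2)^{\dim\g}$.

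As for the main obstacle, the argument is essentially a bookkeeping reduction once the limit $q\ra 1$ is available, and the single genuinely substantive point is the identification $\delta=\ve(\lambda)$: one must know that the $q$-independent constant produced in Proposition \ref{Prop:delta} really is the classical Coxeter trace. This rests on the compatibility of the braid-group action with the specialization $q\ra 1$ recorded in Section \ref{Section:Spe}, together with the observation that $\Tr(\Pi,V(\lambda))$ is a single monomial $\delta\, q^{(\lambda,\lambda+2\rho)/h}$ in $q$ and may therefore be evaluated at $q=1$ without ambiguity. Everything else is exponent matching and a direct appeal to Kostant's theorem.
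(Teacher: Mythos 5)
Your proposal is correct and follows essentially the same route as the paper's own proof: both rest on Proposition \ref{Prop:delta} plus the specialization $q\ra 1$ to identify $\delta=\ve(\lambda)$, then decompose $\Tr(\Pi\ts\id,\mc_q[G])$ over the summands $V(\lambda)\ts V(\lambda)^*$ and match exponents against Theorem \ref{Thm:Kostant} via $r_\g h=k$. The only difference is ordering (you prove (1) first, the paper proves (2) and (3) first assuming (1)), which is immaterial.
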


\begin{proof}
\begin{enumerate}
\item By Proposition \ref{Prop:delta}, there exists a constant $\delta\in\mc$ such that 
$$\Tr (\Pi,V(\lambda))=\delta q^{\frac{(\lambda,\lambda+2\rho)}{h}}=\delta q^{r_\g c(\lambda)}.$$
To determine it, we consider the specialization of $\Pi$ and $V(\lambda)$. When $q$ is specialized to $1$, the left hand side has limit $\Tr (\bold{c},V_1(\lambda))$. On the other hand, as $\delta\in\mc$, the right hand side has limit $\delta$, from which $\delta=\ve(\lambda)$.
\item This point holds by the following computation using (1) and Theorem \ref{Thm:Kostant}:
\begin{eqnarray*}
\Tr (\Pi\ts\id,\mc_q[G]) &=&  \sum_{\lambda\in\pp_+}\Tr \left(\Pi\ts\id,V(\lambda)\ts V(\lambda)^*\right)\\
&=& \sum_{\lambda\in\pp_+}\textrm{dim}_{\mc(q)}V(\lambda) \Tr (\Pi,V(\lambda))\\
&=& \sum_{\lambda\in\pp_+}\ve(\lambda)\textrm{dim}_{\mc(q)}V(\lambda) q^{r_\g c(\lambda)}\\
&=& \left(\prod_{i=1}^l\vp(q^{r_\g h\Phi(\alpha_i,\alpha_i)})\right)^{h+1}\\
&=& \left(\prod_{i=1}^l\vp(q^{(\alpha_i,\alpha_i)})\right)^{h+1}.
\end{eqnarray*}
\item Notice that in the simply laced case, $(\alpha_i,\alpha_i)=2$ and $l(h+1)=\textrm{dim}\g$.
\end{enumerate}
\end{proof}

We finally examine the theorem when $\g=\mathfrak{sl}_2$. In this case, there is only one generator $S\in\fb_\g$ so the Coxeter element is $S$. We will use the basis of $V(n)$ in (\ref{Eq:basis}).
\begin{enumerate}
\item If $n$ is odd, there is no zero-weight space in $V(n)$, in this case, $\Tr (S,V(n))=0$.
\item If $n$ is even, the zero-weight space in $V(n)$ is of dimension $1$ which is generated by $v_m$, where $n=2m$. The action of $S$ on $v_m$ is given by
$$S\cdot v_m=(-1)^m q^{(n-m)(m+1)}v_m=(-1)^mq^{m(m+1)}v_m.$$
\end{enumerate}
As a conclusion,
$$\Tr (S\ts\id,\mc_q[G])=\sum_{m=0}^\infty (-1)^m (2m+1) q^{m(m+1)},$$
which coincides with $\vp(q^2)^3$, verifying the Jacobi's identity.

\end{document}